\DeclareMathOperator{\fin}{fin}
\DeclareMathOperator{\Fin}{fin}
\newcommand{\cstar}{$\textrm{C}^*$}
\newcommand{\set}[2]{\left\{#1\mathrel{}\middle|\mathrel{}#2\right\}}
\newcommand{\seq}[2]{\left\langle #1\;|\; #2\right\rangle}
\newcommand{\fexp}[2]{{}^{#2}{#1}}
\newcommand{\rs}{\mathord{\upharpoonright}}
\newtheorem{theorem}{Theorem}[section]
\newtheorem{lemma}[theorem]{Lemma}
\newtheorem{claim}[theorem]{Claim}
\newtheorem{corollary}[theorem]{Corollary}
\theoremstyle{definition}
\theoremstyle{remark}
\numberwithin{equation}{section}
\title[Homeomorphisms of \v Cech--Stone remainders]{Homeomorphisms of \v
  Cech--Stone Remainders: the Zero-Dimensional Case}
\author{Ilijas Farah}
\address{Ilijas Farah: Department of Mathematics, York University, Toronto, Canada}
\email{ifarah@mathstat.yorku.ca}
\author{Paul McKenney}
\address{Paul McKenney: Department of Mathematics, Miami University, Oxford, Ohio, USA}
\email{mckennp2@miamioh.edu}
\thanks{The first author was partially supported by NSERC}
\begin{document}

  \begin{abstract} We prove, using a weakening of the Proper Forcing Axiom, 
     that any homemomorphism between  \v Cech--Stone remainders of any two
     locally compact, zero-dimensional Polish spaces is induced by a
     homeomorphism between their cocompact subspaces. 
  \end{abstract}

  \maketitle
  
  \section{Introduction}

  The \v Cech--Stone remainder (also known as corona) $\beta X\setminus X$ of a
  topological space $X$ will be denoted~$X^*$.  A continuous map $\varphi : X^*\to
  Y^*$ is called \emph{trivial} if there is a continuous $e : X\to Y$ such that
  $\varphi = e^*$, where $e^* = \beta e \setminus e$ and $\beta e$ is the unique
  continuous extension of $e$ to $\beta X$.  It follows that two remainders
  $X^*$ and $Y^*$ are homeomorphic via a trivial map if and only if there are
  cocompact subspaces of $X$ and $Y$ which are themselves homeomorphic.  In this
  paper we prove the following (see \S\ref{sec:Notation} for the definitions). 
  
  \begin{theorem} \label{main}
     OCA and MA$_{\aleph_1}$ together imply that    every homeomorphism between
     \v Cech--Stone remainders of  locally compact, zero-dimensional, Polish
     spaces is trivial.
  \end{theorem}

  This proves a special case of the  rigidity conjecture that forcing axioms
  imply all homeomorphisms between \v Cech--Stone remainders of locally compact,
  noncompact Polish spaces are trivial (see \cite{Fa:Rigidity}, \cite{Fa:AQ},
  \cite{CoFa:Automorphisms}).  In contrast, the  Continuum Hypothesis (CH),
  implies that  \v Cech--Stone remainders of locally compact, noncompact,
  zero-dimensional Polish spaces are homeomorphic. This is a consequence of
  Parovi\v cenko's topological characterization of $\omega^*$ (see e.g.,
  \cite{vanmill}).  Stone duality between compact, zero-dimensional, Hausdorff
  spaces and Boolean algebras of their clopen sets provides a  model-theoretic
  reformulation of this malleability phenomenon.  For a  locally compact,
  non-compact Hausdorff space $X$ let $\mathcal{C}(X)$ denote the algebra the
  clopen subsets of $X$ and let $\mathcal{K}(X)$ denote its ideal of
  compact-open sets.  If $X$ and $Y$ are in addition zero-dimensional, then
  continuous maps from $X^*$ to $Y^*$ functorially correspond to Boolean algebra
  homomorphisms from $\mathcal{C}(Y)/\mathcal{K}(Y)$ into
  $\mathcal{C}(X)/\mathcal{K}(X)$.  All of these algebras are elementarily
  equivalent and (assuming CH) saturated, and therefore isomorphic   (see
  \cite{DoHa:Universal} for the details and an extension to not necessarily
  zero-dimensional spaces). \footnote{There is a deeper metamathematical
  explanation of the effect of CH; see \cite{Wo:Beyond}.}

  Back to rigidity, Theorem~\ref{main} belongs to a long line of  results going
  back to  Shelah's groundbreaking construction of  an oracle-cc forcing
  extension of the universe in which all autohomeomorphisms of $\omega^*$ are
  trivial (\cite{Sh:Proper}).  Shelah's proof was recast in  terms of forcing
  axioms PFA and OCA+MA$_{\aleph_1}$ in \cite{ShSte:PFA} and
  \cite{Velickovic:OCAA}, respectively.   The latter axiom also implies that
  homeomorphisms between \v Cech--Stone remainders between countable locally
  compact spaces, as well as their arbitrary powers, are trivial (\cite[\S
  4]{Fa:AQ}) as well as strong negations of Parovi\v cenko's theorem
  (\cite{DoHa:Images}, \cite{DoHa:Lebesgue}).

  The interest in quotient rigidity results  was rejuvenated by the discovery
  that the noncommutative analogue of  `are all  automorphisms of $\omega^*$ (or
  of $\mathcal{P}(\omega)/\fin$) trivial?' was a prominent open problem in the theory of
  operator algebras.  Motivated by their work on analytic K-homology, Brown,
  Douglas, and Fillmore asked whether the Calkin algebra associated with the
  separable, infinite-dimensional, complex Hilbert space has  outer
  automorphisms~(\cite{BrDoFi}).  Like its commutative analogue, this question
  cannot be resolved in ZFC, with   CH and OCA implying the opposite answers
  (\cite{PhWe:Calkin}, \cite{Fa:All}).  Other rigidity results in the  setting
  of \cstar-algebras were proved for reduced products of the form $\prod_n
  A_n/\bigoplus_n A_n$ in case when all $A_n$ are matrix algebras
  (\cite{ghasemi2016reduced},  \cite{Ghasemi:FDD}),  separable UHF algebras
  (\cite{McK:Reduced}) or unital separable nuclear  \cstar-algebras
  (\cite{vignati2017thesis}, \cite{MV:N}).

  A general rigidity conjecture for corona \cstar-algebras was stated and
  partially verified in \cite{CoFa:Automorphisms}. The model theory of coronas
  proved to be a bit more complex than that of Boolean algebras.  While the
  reduced products are countably saturated (\cite{FaSh:Rigidity}), coronas
  posses only a modest degree of saturation (\cite{FaHa:Countable},
  \cite{EaVi:Saturation}, \cite{Voi:Countable}, \cite{farah2015calkin}).  In
  return, \cstar-algebras provided a vantage point  that resulted in the
  construction of nontrivial autohomeomorphisms of $X^*$ for every noncompact,
  locally compact, metrizable manifold using CH~(\cite{vignati2017nontrivial}).
  \footnote{The only previously known case was $X=\mathbb R$, see
  \cite{Hart:Cech} and \cite{FaSh:Rigidity}} 

  We note that Theorem~\ref{main} is not optimal. The first author's proof that
  all zero-dimensional, locally compact, Polish spaces satisfy the   \emph{weak
  extension principle} (\cite[Theorem~4.10.1]{Fa:AQ})  will appear elsewhere.
  Dow refuted the related \emph{strong extension principle}
  (\cite[Question~4.11.4]{Fa:AQ}) by constructing a nontrivial continuous map
  from $\omega^*$ into $\omega^*$ (i.e., one that does not have a continuous
  extension to a map from $\beta\omega$ into $\beta\omega$) in ZFC
  (\cite{dow2014non}).  An alternative proof of our main result from a stronger
  assumption (PFA)  is given by~\cite[Theorem~4.3]{FaSh:Rigidity}.  

  In section~\ref{sec:Notation} we introduce some of the language required to
  prove Theorem~\ref{main}.  Section~\ref{sec:Embeddings} treats embeddings of
  $\mathcal{P}(\omega)/\fin$ into $\mathcal{C}(X)/\mathcal{K}(X)$, and we show that under OCA
  +MA$_{\aleph_1}$, every such embedding is trivial.  Much of the proof follows
  the work in \cite{Velickovic:OCAA} and~\cite{Velickovic:D} with only minor
  modifications, so to avoid treading the same ground we only prove one of the
  ingredients going into this theorem.  Section~\ref{sec:Families} completes the
  proof of Theorem~\ref{main} through an analysis of \emph{coherent families of
  continuous functions}.

  \section{Notation}
  \label{sec:Notation}

  Our terminology is standard (see \cite{Ku:Set}).  The assumption of
  Theorem~\ref{main} is a consequence of the \emph{Proper Forcing Axiom}, PFA.
  OCA abbreviates the  \emph{Open Coloring Axiom} (\cite{Todorcevic:PPIT}; not
  to be confused with the eponymous OCA of \cite{ARS}), and MA$_{\aleph_1}$
  refers to Martin's Axiom for $\aleph_1$ dense sets. 

  If $E$ is a set, then $[E]^2$ will denote the set of unordered pairs from $E$.
  If $M\subseteq [E]^2$, then a set $H\subseteq E$ is called
  \emph{$M$-homogeneous} if $[H]^2\subseteq M$.  The \emph{Open Coloring Axiom}
  states: for every separable metric space $E$ and every partition $[E]^2 =
  M_0\cup M_1$ such that $M_0$ is open (here we identify $[E]^2$ with a
  symmetric subset of $E\times E$ minus the diagonal), either
  \begin{enumerate}
    \item  there is an uncountable $M_0$-homogeneous set,  or
    \item  there is a cover of $E$ by countably-many $M_1$-homogeneous sets.
  \end{enumerate}

  We  fix a zero-dimensional, locally compact and noncompact Polish space~$X$.
  Let $\seq{K_n}{n < \omega}$ be an increasing sequence of compact-open sets in
  $X$, such that $X = \bigcup K_n$.  Then $\mathcal{K}(X)$ is generated by $\seq{K_n}{n
  < \omega}$ since
  \[
    K\in\mathcal{K}(X) \iff \exists n\; K\subseteq K_n
  \]
  It is easy to see that $\mathcal{C}(X)$ has size continuum, whereas $\mathcal{K}(X)$ is
  countable.    When $A,B\in\mathcal{C}(X)$ are distinct, we write $\delta(A,B)$ for the
  least $n$ such that $A\cap X_n \neq B\cap X_n$.  If
  \[
    d(A,B) = \left\{\begin{array}{ll} 2^{-\delta(A,B)} & A\neq B \\ 0 & A = B
    \end{array}\right.
  \]
  then $d$ is a Polish metric on $\mathcal{C}(X)$.

  Let $X_0 = K_0$ and $X_{n+1} = K_{n+1}\setminus K_n$.  We will often identify
  $\mathcal{C}(X)$ with $\prod_n \mathcal{C}(X_n)$, and $\mathcal{P}(\omega)$ with $\fexp{2}{\omega}$.
  Under these identifications, $\mathcal{K}(X)$ maps to $\bigoplus_n \mathcal{C}(X_n)$ (the set
  of functions in $\prod_n \mathcal{C}(X_n)$ which are nonempty on only finitely many
  coordinates) and $\fin$ to $\fexp{2}{<\omega}$.  If $Y$ and $Z$ are
  zero-dimensional, locally compact Polish spaces, $\varphi : \mathcal{C}(Y)/\mathcal{K}(Y)\to
  \mathcal{C}(Z)/\mathcal{K}(Z)$ is a homomorphism, and $U\in\mathcal{C}(Y)$, then we write $\varphi\rs U$
  for the restriction $\varphi\rs \mathcal{C}(U)/\mathcal{K}(U)$.  When working with the quotient
  $\mathcal{C}(X)/\mathcal{K}(X)$ we will write $[A]$ for the equivalence class of some
  $A\in\mathcal{C}(X)$.

  \section{Embeddings of $\mathcal{P}(\omega)/\fin$ into $\mathcal{C}(X)/\mathcal{K}(X)$}
  \label{sec:Embeddings}

  Let $e : X\to\omega$ be a continuous map.  If $e^{-1}(n)$ is compact for every
  $n$, then we say $e$ is \emph{compact-to-one}.  If $e$ is compact-to-one, then
  the map $a\mapsto e^{-1}(a)$, from $\mathcal{P}(\omega)$ to $\mathcal{C}(X)$, induces a
  homomorphism $\varphi_e : \mathcal{P}(\omega)/\fin\to \mathcal{C}(X)/\mathcal{K}(X)$.  Moreover, $\varphi_e$
  is injective if and only if $e$ is bounded on compact sets.  We call a
  homomorphism $\varphi : \mathcal{P}(\omega)/\fin\to \mathcal{C}(X)/\mathcal{K}(X)$ \emph{trivial} if it
  is of the form $\varphi_e$ for some compact-to-one, continuous $e$.

  In this section we prove 
  \begin{theorem}
    \label{trivial.embedding}
    Assume OCA+MA$_{\aleph_1}$, and suppose
    \[
      \varphi : \mathcal{P}(\omega)/\fin\to\mathcal{C}(X)/\mathcal{K}(X)
    \]
    is an injective homomorphism. Then $\varphi$ is trivial.
  \end{theorem}
  Towards the proof of Theorem~\ref{trivial.embedding}, we fix an injective
  homomorphism $\varphi : \mathcal{P}(\omega)/\fin\to\mathcal{C}(X)/\mathcal{K}(X)$ and we define 
  \[
    \mathcal{I} = \set{a\subseteq\omega}{\mbox{$\varphi\rs a$ is trivial}}
  \]
  Note that $\mathcal{I}$ is an ideal on $\omega$.

  A family $\mathcal{A}\subseteq\mathcal{P}(\omega)$ is called \emph{almost disjoint} if for
  all distinct $a,b\in\mathcal{A}$, $a\cap b =^* \emptyset$.  Such a family $\mathcal{A}$ is
  called \emph{treelike} if there is some tree $T$ on $\omega$ and a bijection
  $t : \omega\to \fexp{\omega}{<\omega}$ under which each $a\in\mathcal{A}$ corresponds
  to a branch through $T$, and vice-versa.  The following lemma is proven
  in~\cite{Velickovic:OCAA}.
  \begin{lemma}
    \label{ma}
    Assume MA$_{\aleph_1}$.  Then for every uncountable almost-disjoint family
    $\mathcal{A}$ of subsets of $\omega$ we may find an uncountable $\mathcal{B} \subseteq \mathcal{A}$
    and partitions $b = b_0\cup b_1$ for $b\in \mathcal{B}$ such that each family $\mathcal{B}_i
    = \set{b_i}{b\in\mathcal{B}}$ is treelike.
  \end{lemma}
  The following three lemmas do not directly follow from the work
  in~\cite{Velickovic:OCAA}, but their proofs are nearly the same, modulo some
  minor modifications.  Recall that an ideal $\mathcal{J}\subseteq\mathcal{P}(\omega)$ is a
  \emph{P-ideal} if for each countable sequence $A_n\in\mathcal{J}$ ($n < \omega$) there
  is an $A\in\mathcal{J}$ such that for all $n < \omega$, $A_n \subseteq^* A$.
  \begin{lemma}
    \label{p-ideal}
    Assume OCA+MA$_{\aleph_1}$.  If $\mathcal{I}$ is a dense P-ideal then $\varphi$ is
    trivial.
  \end{lemma}

  \begin{lemma}
    \label{adf}
    Assume $\mathfrak{b} > \aleph_1$.  If $\mathcal{I}$ is not a dense P-ideal, then there is an
    uncountable almost disjoint family $\mathcal{A} \subseteq \mathcal{P}(\omega)$ which is
    disjoint from $\mathcal{I}$.
  \end{lemma}

  \begin{lemma}
    \label{ccc.over.fin}
    Assume OCA.  Let $\mathcal{A}$ be an uncountable, tree-like, almost-disjoint family
    of subsets of $\omega$.  Then $\mathcal{I}\setminus\mathcal{A}$ is countable.
  \end{lemma}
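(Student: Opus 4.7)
I would argue by contradiction and apply TA. Suppose $\SA' := \SA\sm\SI$ is uncountable, so $\vp\rs a$ is non-trivial for every $a\in\SA'$. The role of the tree-like hypothesis is exactly to put $\SA'$ inside a separable metric space: fix the bijection $t : \omega\to \fexp{\omega}{<\omega}$ identifying each $a\in\SA$ with a branch through some tree $T\subseteq\fexp{\omega}{<\omega}$, so $\SA'$ sits inside the Polish space $[T]\subseteq \fexp{\omega}{\omega}$. In this topology two branches are close precisely when they share a long initial segment.

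For each $a\in\SA'$ I would record a \emph{witness to non-triviality}. Using the contrapositives of Lemmas~\ref{continuous} and~\ref{sigma-borel}, such a witness can be taken to consist of a pair of subsets $x_a^0, x_a^1\subseteq a$ with $x_a^0=^* x_a^1$ but $F(x_a^0)\Delta F(x_a^1)$ non-compact, together with an increasing sequence of indices $\ell^a_k$ along which the symmetric difference is nonempty on $X_{\ell^a_k}$. The plan is to select these witnesses in a way that depends only on finite initial data along the branch $a$, so that incompatibility of witnesses between $a$ and $b$ is a locally verifiable property.

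With the witnesses in place, I would define a partition $[\SA']^2 = M_0\cup M_1$ with $M_0$ open in the topology induced from $[T]$: a pair $\{a,b\}$ lies in $M_0$ when for some finite initial configuration the witnesses $(x_a^0,x_a^1)$ and $(x_b^0,x_b^1)$ force contradictory behavior of $F$ on some $X_n$ with $n$ small relative to the level at which $a$ and $b$ split. Applying TA yields two alternatives. If $\SA' = \bigcup_n H_n$ with $[H_n]^2\subseteq M_1$, the pairwise compatibility of witnesses within each $H_n$ allows one to assemble a countable family of Borel maps that $F$-approximate $\vp$ modulo $\SK(X)$ on $\bigcup H_n$; Lemma~\ref{sigma-borel} then forces $\vp\rs a$ to be trivial for all but countably many $a\in H_n$, contradicting $H_n\subseteq \SA'$. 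If instead there is an uncountable $M_0$-homogeneous $H\subseteq \SA'$, I would use $MA_{\aleph_1}$ on a $\sigma$-centered poset built from the witnesses along $H$ to produce a single subset $c\subseteq\omega$ whose image $F(c)$ would have to realize uncountably many pairwise incompatible finite constraints on the $X_n$'s, a contradiction.

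The hardest part, I expect, is calibrating the coloring. On one hand, $M_0$ must be open in the tree topology and strong enough that an uncountable $0$-homogeneous set is genuinely inconsistent with $\vp$ being a well-defined homomorphism under $MA_{\aleph_1}$; on the other hand, $M_1$-homogeneity must still deliver enough uniformity on each $H_n$ to run the countable-Borel-cover construction needed for Lemma~\ref{sigma-borel}. Getting the $M_0$-case to close without appealing to an axiom stronger than $MA_{\aleph_1}$ — i.e., identifying the right ccc, ideally $\sigma$-centered, forcing that diagonalizes across the witnesses — is the step I anticipate will require the most care.
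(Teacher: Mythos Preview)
Your plan has two concrete gaps that prevent it from going through.

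First, the ``witness to non-triviality'' you describe does not exist. You propose, for $a\in\SA'$, subsets $x_a^0,x_a^1\subseteq a$ with $x_a^0=^* x_a^1$ but $F(x_a^0)\Delta F(x_a^1)$ non-compact. Since $F$ is a lift of the homomorphism $\vp$, the condition $x_a^0=^* x_a^1$ forces $F(x_a^0)\Delta F(x_a^1)\in\SK(X)$ always; there is nothing to witness here. Non-triviality of $\vp\rs a$ is the \emph{global} statement that no single continuous $e:F(a)\to a$ lifts $\vp\rs a$; the contrapositives of Lemmas~\ref{continuous} and~\ref{sigma-borel} only say that no continuous or $\sigma$-Borel lift exists, and do not hand you a finite, locally detectable obstruction attached to $a$. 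Without such a witness, your coloring on $[\SA']^2$ has no content, and neither alternative of TA can be analyzed as you outline.

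Second, the lemma assumes only TA, whereas your $M_0$-homogeneous case invokes $MA_{\aleph_1}$.

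The paper's argument avoids both problems by changing the underlying space for the coloring. Rather than coloring pairs from $\SA'$, one colors pairs from
\[
  E=\set{(a,b)}{\exists c\in\SA\;\; b\subseteq a\subseteq c},
\]
with $\{(a,b),(\bar a,\bar b)\}\in M_0$ iff $\sigma(a)\neq\sigma(\bar a)$, $a\cap\bar b=\bar a\cap b$, and $F(a)\cap F(\bar b)\neq F(\bar a)\cap F(b)$. An uncountable $M_0$-homogeneous set is ruled out in ZFC alone: setting $d=\bigcup\{b:(a,b)\in H\}$ and using that $d\cap a=b$ for each $(a,b)\in H$, a counting/pigeonhole argument on $K_n$ produces two pairs which are in fact $M_1$-related. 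In the $M_1$-cover alternative, one does \emph{not} try to contradict non-triviality directly; instead, for each $c\in\SA$ avoiding the countably many $\sigma$-images from countable dense $D_n\subseteq E_n$, the $M_1$-homogeneity together with almost-disjointness lets one build explicit Borel functions $F_n$ (from the data in the $D_n$) that approximate $F$ on $\pow(c)$ modulo $\SK(X)$, after a suitable splitting $c=c_0\cup c_1$. Lemma~\ref{sigma-borel} then gives $c\in\SI$. The point is that the space $E$ carries the pairs $(a,b)$ needed to \emph{reconstruct} a lift, which a coloring on $\SA'$ alone cannot supply.
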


  Theorem~\ref{trivial.embedding} now follows from a straightforward combination
  of Lemmas~\ref{ma}, \ref{p-ideal}, \ref{adf} and \ref{ccc.over.fin}.  To
  illustrate the kind of modifications necessary in translating from
  \cite{Velickovic:OCAA}, we will give a proof of Lemma~\ref{p-ideal}.

  \begin{proof}[Proof of Lemma~\ref{p-ideal}]
    For each $a\in\mathcal{I}$, we fix $Z_a\in\mathcal{C}(X)$ and a continuous, compact-to-one
    map $e_a : Z_a\to a$ such that $\varphi([a]) = [Z_a]$ and for all $b\subseteq
    a$, $\varphi([b]) = [e_a^{-1}(b)]$.  We define $f_a : \omega\to \mathcal{C}(X)$ by
    \[
      f_a(n) = e_a^{-1}(\{n\})
    \]
    Define a partition $[\mathcal{I}]^2 = M_0\cup M_1$ by placing $\{a,b\}\in M_0$ if
    and only if there is some $n\in a\cap b$ such that $f_a(n) \neq f_b(n)$.
    Then $M_0$ is open when $\mathcal{I}$ is given the topology obtained by identifying
    $a\in \mathcal{I}$ with $(a,f_a)\in \mathcal{P}(\omega)\times \fexp{\mathcal{C}(X)}{\omega}$.
    \begin{claim}
      There is no uncountable, $M_0$-homogeneous subset $H$ of $\mathcal{I}$.
    \end{claim}
    \begin{proof}
      Assume $H$ is such a set, and that $|H| = \aleph_1$.  Since $\mathcal{I}$ is a
      P-ideal, there is a set $\bar{H}\subseteq \mathcal{I}$ such that for every $a\in
      H$ there is some $b\in \bar{H}$ with $a\subseteq^* b$, and moreover
      $\bar{H}$ is a chain of order-type $\omega_1$ with respect to
      $\subseteq^*$.  By OCA, there is an uncountable subset of $\bar{H}$ which
      is homogeneous for one of the two colors $M_0$ and $M_1$; hence, by
      passing to this subset, we may assume $\bar{H}$ is either $M_0$ or $M_1$
      homogeneous.

      Say $\bar{H}$ is $M_1$-homogeneous.  Put $\bar{a} = \bigcup \bar{H}$, and
      $\bar{f} = \bigcup_{a\in\bar{H}} f_a$.  Then $\bar{f} : \bar{a}\to\mathcal{C}(X)$,
      and for all $a\in H$ we have $a\subseteq^* \bar{a}$ and $f_{\bar{a}}\rs
      (a\cap\bar{a}) =^* f_a\rs (a\cap\bar{a})$.  Choose $n$ so that for
      uncountably many $a\in H$, we have $a\setminus n \subseteq \bar{a}$, and
      $f_{\bar{a}}\rs a\setminus n = f_a\rs a\setminus n$.  Then if $a,b\in H$ are such, and
      $f_a\rs n = f_b\rs n$, we have $\{a,b\}\in M_1$, a contradiction.
      
      So $\bar{H}$ is $M_0$-homogeneous.  Define a poset $\mathbb{P}$ as follows.  Put
      $p\in \mathbb{P}$ if and only if $p = (A_p, m_p, H_p)$ where $m_p < \omega$,
      $A_p\in\mathcal{C}(K_{m_p})$, and $H_p\in [\bar{H}]^{<\omega}$, and for all
      distinct $a,b\in H_p$, there is an $n\in a\cap b$ such that
      \[
        \lnot (f_a(n)\cap A_p = \emptyset \iff f_b(n)\cap A_p = \emptyset)
      \]
      That is, one of $f_a(n)$, $f_b(n)$ is disjoint from $A_p$, and the other
      isn't.  Put $p \le q$ if and only if $m_p \ge m_q$, $A_p\cap K_{m_q} =
      A_q$, and $H_p\supseteq H_q$.

      First we must show that $\mathbb{P}$ is ccc.  Suppose $\mathcal{X}$ is an uncountable
      subset of $\mathbb{P}$.  We may assume without loss of generality that for some
      fixed $m$ and $A\in\mathcal{C}(K_m)$, and for all $p\in\mathcal{X}$, $m_p = m$ and $A_p =
      A$, and moreover that $H_p$ is the same size for all $p\in\mathcal{X}$.  Let $a_p$
      be the minimal element of $H_p$ under $\subseteq^*$, for each $p\in\mathcal{X}$.
      Find $n_p$ so that for all $a\in H_p$,
      \[
        f_{a_p}\rs (a_p\setminus n_p) \subseteq f_a\qquad e_{a_p}''K_m \subseteq n_p
      \]
      We may assume that for some fixed $n$, we have $n_p = n$ for all $p\in\mathcal{X}$.
      Find $p,q\in\mathcal{X}$ with $f_{a_p}\rs n = f_{a_q}\rs n$.  Since $\{a_p,a_q\}\in
      M_0$, there is some $k\in a_p\cap a_q$ such that $f_{a_p}(k) \neq
      f_{a_q}(k)$.  Then $k\ge n$, and so $f_{a_p}(k)\cap K_m = f_{a_q}(k)\cap
      K_m = \emptyset$.  At least one of $f_{a_p}(k)\setminus f_{a_q}(k)$ and
      $f_{a_q}(k)\setminus f_{a_p}(k)$ must be nonempty; whichever one it is, call it
      $B$.  Put $A_r = A\cup B$ and $H_r = H_p\cup H_q$, and choose $m_r$ large
      enough that $A_r\subseteq K_{m_r}$. Then $r = (A_r,m_r,H_r)\in\mathbb{P}$, and
      $r\le p,q$.

      By MA$_{\aleph_1}$, there is a set $A\in\mathcal{C}(X)$ and an uncountable $H^*
      \subseteq \bar{H}$ such that for all distinct $a,b\in H^*$,
      \[
        \exists n\in a\cap b\quad \lnot (f_a(n) \cap A = \emptyset \iff
        f_b(n)\cap A = \emptyset)
      \]
      Fix $x\subseteq\omega$ such that $F(x) = A$.  Then for all $a\in H^*$,
      $e_a^{-1}(x\cap a) \Delta (A\cap F(a))$ is compact; hence there are $k_a$
      and $m_a$ such that
      \[
        e_a^{-1}(x\cap a \setminus k_a) = (A\cap F(a))\setminus
        K_{m_a}\qquad\mbox{and}\qquad e_a^{-1}(a\setminus k_a) = F(a)\setminus K_{m_a}
      \]
      Then, for all $n\in a\setminus k_a$, $n\in x$ implies $f_a(n) \subseteq A$, and
      $n\not\in x$ implies $f_a(n) \cap A = \emptyset$.  Fix distinct $a,b\in
      H^*$ with $k_a = k_b = k$, and $f_a\rs k = f_b\rs k$.  Then,
      \[
        \forall n\in a\cap b\; (f_a(n)\cap A = \emptyset \iff f_b(n)\cap A =
        \emptyset)
      \]
      This contradicts the choice of $A$.
    \end{proof}
    By OCA, there is a cover of $\mathcal{I}$ by countably many sets $\mathcal{I}_n$, each of
    which is $M_1$-homogeneous.  Since $\mathcal{I}$ is a P-ideal, at least one of the
    $\mathcal{I}_n$'s must be cofinal in $\mathcal{I}$ with respect to $\subseteq^*$.  Choose
    such an $\mathcal{I}_n$, and let $f = \bigcup\set{f_a}{a\in\mathcal{I}_n}$.  Then $f$ is a
    function from some subset of $\omega$ to $\mathcal{C}(X)$.  Setting $e(x) = n$ if
    and only if $x\in f(n)$, we get a function $e : X\to \omega$, and since
    $\mathcal{I}$ is dense and $\mathcal{I}_n$ cofinal in $\mathcal{I}$, $a\mapsto e^{-1}(a)$ witnesses
    that $\varphi$ is trivial.
  \end{proof}

  \section{Coherent families of continuous functions}
  \label{sec:Families}

  \begin{theorem}
    \label{trivial.isomorphism}
    Assume OCA+MA$_{\aleph_1}$.  Let $X$ and $Y$ be zero-dimensional, locally
    compact Polish spaces, and let $\varphi : \mathcal{C}(Y)/\mathcal{K}(Y) \to \mathcal{C}(X)/\mathcal{K}(X)$ be an
    isomorphism.  Then there are compact-open $K\subseteq X$ and $L\subseteq Y$,
    and a homeomorphism $e : X\setminus K\to Y\setminus L$, such that for all $A\in \mathcal{C}(Y\setminus
    L)$, $\varphi([A]) = [e^{-1}(A)]$.
  \end{theorem}

  By Stone duality, a homeomorphism $\varphi : X^*\to Y^*$ induces an isomorphism
  $\hat{\varphi} : \mathcal{C}(Y)/\mathcal{K}(Y)\to \mathcal{C}(X)/\mathcal{K}(X)$, and any map $e$ as in the
  conclusion to Theorem~\ref{trivial.isomorphism} will in this case be a witness
  to the triviality of $\varphi$.  Hence Theorem~\ref{trivial.isomorphism} implies
  Theorem~\ref{main}.  Before proving Theorem~\ref{trivial.isomorphism} we note
  a corollary involving definable isomorphisms.
  \begin{corollary}
    \label{definable}
    Suppose $X$ and $Y$ are zero-dimensional, locally compact, Polish spaces,
    and $\varphi : \mathcal{C}(Y)/\mathcal{K}(Y)\to \mathcal{C}(X)/\mathcal{K}(X)$
    is an isomorphism such that the set
    \[
      \Gamma = \set{(A,B)\in\mathcal{C}(Y)\times \mathcal{C}(X)}{\varphi([A]) = [B]}
    \]
    is Borel.  Then $\varphi$ is trivial.
  \end{corollary}
  \begin{proof}[Proof of Corollary~\ref{definable}]
    The fact that $\varphi$ is an isomorphism between $\mathcal{C}(Y)/\mathcal{K}(Y)$ and
    $\mathcal{C}(X)/\mathcal{K}(X)$ can be written as a $\mathbf{\Pi}^1_2$ statement using
    $\Gamma$; hence by Schoenfield absoluteness, if $V^\mathbb{P}$ is a forcing
    extension satisfying OCA+MA$_{\aleph_1}$ (see \cite{Todorcevic:PPIT}), then
    in $V^\mathbb{P}$ the map $\bar{\varphi} : \mathcal{C}(Y)/\mathcal{K}(Y)\to \mathcal{C}(X)/\mathcal{K}(X)$, defined
    from the reinterpretation of $\Gamma$ in $V^\mathbb{P}$, is also an isomorphism.
    By Theorem~\ref{trivial.isomorphism}, then, we have in $V^\mathbb{P}$ that
    \[
      \exists e\in C(X,Y) \; \forall A\in\mathcal{C}(Y)\; \bar{\varphi}([A]) = [e^{-1}(A)]
    \]
    where $C(X,Y)$ denotes the space of continuous maps from $X$ to $Y$.
    This can be written as a $\mathbf{\Sigma}^1_2$ statement and so by
    Schoenfield absoluteness again, it must be true in $V$ with $\varphi$
    replacing $\bar{\varphi}$.
  \end{proof}

  Before the proof of Theorem~\ref{trivial.isomorphism} we set down some more
  notation.  Fix $X,Y$ and $\varphi$ as in the statement of the theorem.  Let
  $L_n$ be an increasing sequence of compact subsets of $Y$, with union $Y$, and
  let $Y_{n+1} = L_{n+1}\setminus L_n$ and $Y_0 = L_0$.  Let $\mathcal{B}$ be a countable base
  for $Y$ consisting of compact-open sets, such that
  \begin{itemize}
    \item  for all $U\in\mathcal{B}$, the set of $V\in\mathcal{B}$ with $V\supseteq U$ is finite
    and linearly ordered by $\subseteq$, and
    \item  for all $U\in\mathcal{B}$ and all $n < \omega$, either $U\subseteq Y_n$ or
    $U\cap Y_n = \emptyset$.
  \end{itemize}
  It follows that for all $U,V\in\mathcal{B}$, either $U\cap V = \emptyset$, $U\subseteq
  V$, or $V\subseteq U$.  Let $\mathbb{P}$ be the poset of all partitions of $Y$ into
  elements of $\mathcal{B}$, ordered by refinement;
  \[
    P \prec Q \iff \forall U\in P\; \exists V\in Q\quad U\subseteq V
  \]
  We also use $\prec^*$ to denote \emph{eventual refinement};
  \[
    P \prec^* Q \iff \forall^\infty U\in P\; \exists V\in Q\quad U\subseteq V
  \]
  When $P\prec^* Q$ we let $\Gamma(P,Q)$ be the least $n$ such that every $U\in
  P$ disjoint from $L_n$ is contained in some element of $Q$.

  For a given $P\in\mathbb{P}$, let $s_P : Y\to P$ be the unique function satisfying
  $x\in s_P(x)$ for all $x\in Y$; similarly, when $P,Q\in\mathbb{P}$ and $P\prec Q$ we
  let $s_{PQ} : P\to Q$ be the unique function satisfying $U\subseteq s_{PQ}(U)$
  for all $U\in P$.  These maps induce embeddings $\sigma_P : \mathcal{P}(P)/\fin\to
  \mathcal{C}(Y)/\mathcal{K}(Y)$ and $\sigma_{PQ} : \mathcal{P}(Q)/\fin\to\mathcal{P}(P)/\fin$ in the usual
  way.

  Finally, we need to prove a uniqueness result for maps $e : Z\to\omega$
  inducing the same map $\mathcal{P}(\omega)/\fin\to \mathcal{C}(Z)/\mathcal{K}(Z)$.
  \begin{lemma}
    \label{lift.uniqueness} Suppose $Z\in\mathcal{C}(X)$ and $e, f : Z\to \omega$ are
    continuous, compact-to-one maps, such that $e^{-1}(a)\Delta f^{-1}(a)$ is
    compact for every $a\subseteq\omega$.  Then $\set{x\in Z}{e(x)\neq f(x)}$ is
    compact.
  \end{lemma}
  \begin{proof}
    Suppose not; then for some infinite set $I\subseteq \omega$ and all $n\in
    I$, there is a point $x_n\in Z\cap X_n$ such that $e(x_n) \neq f(x_n)$.
    Since $e$ and $f$ are compact-to-one, we may assume also that $m\neq n$
    implies $e(x_m) \neq e(x_n)$ and $f(x_m) \neq f(x_n)$.  Now define a
    coloring $F : [I]^2\to 3$ by
    \[
      F(\{m < n\}) = \left\{
        \begin{array}{ll}
          0 & e(x_m) \neq f(x_n) \land f(x_m) \neq e(x_n) \\
          1 & e(x_m) = f(x_n) \land f(x_m)\neq e(x_n) \\
          2 & e(x_m) \neq f(x_n) \land f(x_m) = e(x_n) \\
        \end{array}
        \right.
    \]
    By Ramsey's theorem, there is an infinite set $a\subseteq I$ which is
    homogeneous for this coloring.  Suppose first that $a$ is $1$-homogeneous,
    and let $m < n < k$ be members of $a$.  Then
    \[
      e(x_m) = f(x_n)\quad\mbox{and}\quad e(x_m) = f(x_k)\quad\mbox{and}\quad
      e(x_n) = f(x_k)
    \]
    which implies $e(x_n) = f(x_n)$, a contradiction.  Similarly, $a$ cannot be
    $2$-homogeneous.

    Now suppose $a$ is $0$-homogeneous.  Let $a = a_0\cup a_1$ be a partition of
    $a$ into two infinite sets, and put $W_i = \set{x_n}{n\in a_i}$ and $W =
    \set{x_n}{n\in a} = W_0\cup W_1$.  From the homogeneity of $a$, it follows
    that $e''W\cap f''W = \emptyset$, and hence (as $e$ and $f$ are injective on
    $W$)
    \[
      W\cap e^{-1}((e''W_0)\cup (f''W_1)) = W_0\quad\mbox{and}\quad W\cap
      f^{-1}((e''W_0)\cup (f''W_1)) = W_1
    \]
    So, if $b = e''W_0\cup f''W_1$, we have $W\subseteq e^{-1}(b)\Delta
    f^{-1}(b)$.  But $W$ is not compact, so this is a contradiction.
  \end{proof}

  \begin{proof}[Proof of Theorem~\ref{trivial.isomorphism}]
    For each $P\in\mathbb{P}$, let $\varphi_P = \varphi\circ \sigma_P$.  Then
    $\varphi_P$ is an embedding of $\mathcal{P}(P)/\fin$ into
    $\mathcal{C}(X)/\mathcal{K}(X)$.  By Theorem~\ref{trivial.isomorphism},
    there is a continuous map $e_P : X\to P$ such that $a\mapsto e_P^{-1}(a)$
    lifts $\varphi_P$.  Note that if $P,Q\in\mathbb{P}$ and $P\prec^* Q$, then
    the following diagram commutes:

    \begin{center}
    \leavevmode
    \xymatrix{
      \mathcal{P}(P)/\fin \ar[r]^{\varphi_P} & \mathcal{C}(X)/\mathcal{K}(X) \\
      \mathcal{P}(Q)/\fin \ar[u]^{\sigma_{PQ}} \ar[ur]_{\varphi_Q} & \\
    }
    \end{center}

    So by Lemma~\ref{lift.uniqueness}, the set $\set{x\in X}{s_{PQ}(e_P(x)) \neq
    e_Q(x)}$ is compact.  Now let $[\mathbb{P}]^2 = M_0\cup M_1$ be the
    partition defined by
    \[
      \{P,Q\}\in M_0\iff \exists x\in X\quad s_{P,P\lor Q}(e_P(x)) \neq
      s_{Q,P\lor Q}(e_Q(x)) 
    \]
    Here $P\lor Q$ is the finest partition coarser than both $P$ and $Q$.  If
    we define $f_P : \mathcal{B}\to\mathcal{C}(X)$ by
    \[
      f_P(U) = \set{x\in X}{e_P(x) \subseteq U}
    \]
    then we have
    \[
      \{P,Q\}\in M_0\iff \exists U\in\mathcal{B}\quad f_P(U) \neq f_Q(U)
    \]
    and it follows that $M_0$ is open in the topology on $\mathbb{P}$ obtained by
    identifying $P$ with $f_P$.

    \begin{claim}
      There is no uncountable, $M_0$-homogeneous subset of $\mathbb{P}$.
    \end{claim}
    \begin{proof}
      Suppose $H$ is such, and has size $\aleph_1$.  Using $MA_{\aleph_1}$ with
      a simple modification of Hechler forcing, we see that there is some
      $\bar{P}\in\mathbb{P}$ such that $P\succ^*\bar{P}$ for all $P\in H$.  By thinning
      out $H$ and refining a finite subset of $\bar{P}$, we may assume that
      $P\succ\bar{P}$ for all $P\in H$, and moreover that there is an $\bar{n}$
      such that for all $P\in H$,
      \[
        \set{x\in X}{s_{\bar{P},P}(e_{\bar{P}}(x)) \neq e_P(x)} \subseteq K_{\bar{n}}
      \]
      Now fix $P,Q\in H$ such that $e_P\rs K_{\bar{n}} = e_Q\rs K_{\bar{n}}$.
      Then $s_{P,P\lor Q}\circ e_P = s_{Q,P\lor Q}\circ e_Q$, contradicting
      the fact that $\{P,Q\}\in M_0$.
    \end{proof}

    By OCA, there is a countable cover of $\mathbb{P}$ by $M_1$-homogeneous sets; since
    $\mathbb{P}$ is countably directed under $\succ^*$, it follows that one of them,
    say $\mathbb{Q}$, is cofinal in $\mathbb{P}$.  It follows moreover that for some $n$, we
    have
    \[
      \forall P\in\mathbb{P}\;\exists Q\in\mathbb{Q}\quad \Gamma(Q,P)\le n
    \]
    That is, $\mathbb{Q}$ is cofinal in $\mathbb{P}$ under $\succ^n$ defined by
    \[
      P \prec^n Q \iff \forall U\in P \;\; (U\cap L_n = \emptyset \implies
      \exists V\in Q\; U\subseteq V)
    \]
    
    \begin{claim}
      There is a compact set $K\subseteq X$ and a unique continuous map $e :
      X\setminus K\to Y$ satisfying
      \[
        \forall x\in X\setminus K\quad e(x)\in \bigcap_{P\in\mathcal{Q}} e_P(x)
      \]
    \end{claim}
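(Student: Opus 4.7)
The plan is to realize $e(x)$, for $x$ outside a suitable compact set $K$, as the unique common point of the family $\{e_P(x)\}_{P \in \QQ}$, and then to verify continuity and uniqueness. I would begin by exploiting $\sigma$-directedness of $\PP$ under eventual refinement and $\prec^n$-cofinality of $\QQ$ to extract a countable sequence $Q_0 \prec^n Q_1 \prec^n \cdots$ in $\QQ$ which is $\prec^n$-cofinal in $\QQ$ and whose cells outside $L_n$ have diameter, with respect to a fixed complete metric on $Y$, tending to zero.

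For each $k$, set
\[
  K_k = e_{Q_k}^{-1}\bigl(\{U \in Q_k : U \cap L_n \neq \emptyset\}\bigr),
\]
which is compact because $e_{Q_k}$ is compact-to-one and only finitely many cells of $Q_k$ meet $L_n$. Take $K = \bigcap_k K_k$, a compact subset of $X$.

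The technical heart of the proof is to show that, for $x \in X\sm K$ and $k_0$ least with $x \notin K_{k_0}$, one has $e_{Q_k}(x) \subseteq e_{Q_{k_0}}(x)$ and $e_{Q_k}(x) \cap L_n = \emptyset$ for all $k \ge k_0$. The argument would use $M_1$-homogeneity of $\QQ$ applied to the pair $\{Q_k, Q_{k_0}\}$, which forces $e_{Q_k}(x)$ and $e_{Q_{k_0}}(x)$ to lie in the same cell of $Q_k \join Q_{k_0}$, combined with the tree property of $\SB$ and the refinement relation $Q_k \prec^n Q_{k_0}$. Granting this nesting, the sets $\{e_{Q_k}(x)\}_{k \ge k_0}$ form a decreasing family of clopen subsets of $Y$ with diameter tending to zero, whose intersection is a single point $e(x)$. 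Verifying $e(x) \in e_P(x)$ for arbitrary $P \in \QQ$ reduces to the analogous nesting $e_{Q_k}(x) \subseteq e_P(x)$ for $k$ large, using $\prec^n$-cofinality of $\{Q_k\}$ and $M_1$-homogeneity of $\{Q_k, P\}$. Continuity at $x_0 \in X\sm K$ follows from local constancy of $e_{Q_k}$ together with the shrinking-diameter condition, and uniqueness is immediate because the intersection is forced to be a singleton.

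The principal obstacle is this technical heart: ruling out pathological $Q_k$-cells that properly contain $e_{Q_{k_0}}(x)$ while meeting $L_n$. Such a cell would expand the $Q_k \join Q_{k_0}$-cell containing $e_{Q_{k_0}}(x)$ beyond $e_{Q_{k_0}}(x)$ itself, potentially allowing $e_{Q_k}(x)$ to be a ``bad'' cell. Overcoming this will require a careful combinatorial analysis of how $L_n$-meeting cells of $Q_k$ interact with $L_n$-disjoint cells of $Q_{k_0}$ under the constraint $Q_k \prec^n Q_{k_0}$, possibly augmented by refining the choice of $\{Q_k\}$ within $\QQ$ so as to control $L_n$-meeting cells.
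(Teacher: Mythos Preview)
Your plan is broadly sound, but you are making the argument harder than it needs to be, and the ``principal obstacle'' you identify is not actually an obstacle at all.

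The key simplification you are missing is that for every $x\in X$ the family $\{e_P(x):P\in\QQ\}$ is a \emph{chain} in $\SB$. Indeed, $M_1$-homogeneity says that for any $P,Q\in\QQ$ the sets $e_P(x)$ and $e_Q(x)$ lie in the same cell of $P\join Q$; in particular $e_P(x)\cap e_Q(x)\neq\emptyset$, and by the tree property of $\SB$ one must contain the other. A chain of nonempty compact sets has nonempty intersection, so $\bigcap_{P\in\QQ} e_P(x)\neq\emptyset$ for every $x$ --- there is no need to extract a countable cofinal sequence, nor to argue that the sequence is eventually decreasing.

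Your stated obstacle --- a $Q_k$-cell that properly contains $e_{Q_{k_0}}(x)$ while meeting $L_n$ --- cannot occur. Recall the stipulation on $\SB$: every $U\in\SB$ satisfies $U\subseteq Y_m$ for a unique $m$. Hence if $V\in\SB$ contains $e_{Q_{k_0}}(x)$ and the latter is disjoint from $L_n$ (i.e.\ lies in some $Y_m$ with $m>n$), then $V$ lies in the same $Y_m$ and is also disjoint from $L_n$. More to the point, this same observation combined with the chain property gives: once a single $e_{P_0}(x)$ is disjoint from $L_n$, \emph{every} $e_P(x)$ with $P\in\QQ$ is disjoint from $L_n$. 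The paper therefore takes $K=\{x:\forall P\in\QQ\;\; e_P(x)\subseteq L_n\}$ directly, observes it is contained in any single $e_P^{-1}(\{U\in P:U\subseteq L_n\})$ and hence compact, and for $x\notin K$ uses $\prec^n$-cofinality of $\QQ$ to produce $e_P(x)$ of arbitrarily small diameter. Continuity follows since $e^{-1}(U)=\bigcup_{P\in\QQ}\{x: e_P(x)\subseteq U\}$ is open.
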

    \begin{proof}
      Fix $x\in X$.  If $P,Q\in\mathbb{Q}$, then by $M_1$-homogeneity of
      $\mathbb{Q}$ we have
      \[
        s_{P,P\lor Q}(e_P(x)) = s_{Q,P\lor Q}(e_Q(x))
      \]
      Then, the unique member of $P\lor Q$ containing $e_P(x)$ is the same as
      the unique member of $P\lor Q$ containing $e_Q(x)$.  It follows that
      $e_P(x)\cap e_Q(x)\neq\emptyset$, and so either $e_P(x)\subseteq e_Q(x)$
      or vice-versa.  Then the collection $\set{e_P(x)}{P\in\mathbb{Q}}$ is a chain,
      and hence by compactness has nonempty intersection.

      Now let
      \[
        K = \set{x\in X}{\forall P\in\mathbb{Q}\; e_P(x)\subseteq L_n} \subseteq
        \bigcap_{P\in\mathbb{Q}} e_P^{-1}(P\cap \mathcal{C}(L_n))
      \]
      Then $K$ is contained in a compact set.  If $x\in X\setminus K$ and
      $P\in\mathbb{Q}$, then $e_P(x)$ is disjoint from $L_n$.  Then for any
      $x\in X\setminus K$ and $\epsilon > 0$, there is some $P\in\mathbb{Q}$
      such that $e_P(x)$ has diameter less than $\epsilon$ (since $\mathbb{Q}$
      is cofinal in $\mathbb{P}$ under $\succ^n$).  Thus $e$, as defined above,
      is unique.

      To see that $e$ is continuous, note that for any open $U\subseteq X$,
      \[
        x\in e^{-1}(U) \iff \exists P\in\mathbb{Q}\quad e_P(x)\subseteq U
      \]
    \end{proof}

    \begin{claim}
      The map $U\mapsto e^{-1}(U)$ lifts $\varphi$.
    \end{claim}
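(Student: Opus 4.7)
The plan is, given any $U \in \SC(Y)$, to find a single partition $Q \in \QQ$ that separates $U$ from $Y\sm U$ outside $L_n$, and then to compare $e^{-1}(U)$ with $e_Q^{-1}(a_Q)$ where $a_Q = \set{W \in Q}{W \subseteq U}$.  By construction of $e_Q$, the map $a \mapsto e_Q^{-1}(a)$ lifts $\vp_Q = \vp\circ\sigma_Q$; I will show on one side that $[\bigcup a_Q] = [U]$ in $\SC(Y)/\SK(Y)$ (so that $\vp([U]) = \vp_Q([a_Q]) = [e_Q^{-1}(a_Q)]$), and on the other side that $e^{-1}(U) \Delta e_Q^{-1}(a_Q) \in \SK(X)$.

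The first step is to construct $P_U \in \PP$ refining $\{U,Y\sm U\}$; such a $P_U$ exists because every clopen subset of $Y$ can be partitioned into elements of $\SB$ each contained in some $Y_n$.  Then, using the cofinality of $\QQ$ under $\succ^n$ established earlier in the proof, I choose $Q \in \QQ$ with $\Gamma(Q,P_U) \le n$.  Every $W \in Q$ disjoint from $L_n$ must then lie in some piece of $P_U$, and hence is either contained in $U$ or disjoint from $U$.

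To handle the $Y$-side, note that since $L_n$ is compact and the pieces of $Q$ are pairwise disjoint clopen sets, the family $\set{W \in Q}{W \cap L_n \neq \emptyset}$ is finite, so its union $R$ is in $\SK(Y)$.  The separation property then gives $\bigcup a_Q \subseteq U$ and $U \sm \bigcup a_Q \subseteq R$, so $[\bigcup a_Q] = [U]$ in $\SC(Y)/\SK(Y)$, and hence $\vp([U]) = [e_Q^{-1}(a_Q)]$.

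For the $X$-side, take any $x \in X\sm K$ with $e_Q(x) \cap L_n = \emptyset$: then $e_Q(x)$ lies entirely inside or entirely outside $U$, and since the preceding claim gives $e(x) \in e_Q(x)$, we obtain $x \in e^{-1}(U) \iff x \in e_Q^{-1}(a_Q)$.  The symmetric difference is therefore contained in $K \cup e_Q^{-1}(\set{W \in Q}{W \cap L_n \neq \emptyset})$, a compact set together with a finite union of sets $e_Q^{-1}(W)$ (each compact-open, since $e_Q$ is compact-to-one and continuous).  The main remaining obstacle is purely one of bookkeeping: the $K$ produced by the preceding claim is only compact, whereas the theorem we are proving demands compact-openness; but by local compactness of $X$, $K$ is contained in some compact-open $K'$ (e.g.\ some $K_m$), and replacing $K$ by $K'$ absorbs the compact piece of the error set and places the whole symmetric difference in $\SK(X)$.
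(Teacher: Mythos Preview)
Your proof is correct and follows essentially the same route as the paper's. Both arguments, given $U\in\SC(Y)$, pass to a partition $Q\in\QQ$ that (eventually) refines $\{U,Y\sm U\}$, use the inclusion $e(x)\in e_Q(x)$ from the preceding claim to compare $e^{-1}(U)$ with $e_Q^{-1}(a_Q)$, and then invoke that $e_Q$ lifts $\vp_Q$. The paper organizes this slightly differently, first proving the intermediate statement that $U\mapsto e^{-1}(U)$ lifts $\vp_P$ for \emph{every} $P\in\QQ$ (via the observation that $e_P^{-1}(\{U\}) = e^{-1}(U)$ for all but finitely many $U\in P$), and only then reducing an arbitrary $A\in\SC(Y)$ to some $Q\in\QQ$ with $Q\prec^* P_A$; you instead carry out both halves at once for a fixed $U$, using the sharper $\succ^n$-cofinality of $\QQ$ rather than mere $\succ^*$-cofinality. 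One small simplification you could make: since every element of $\SB$ is contained in a single $Y_m$, and the sets $\{e_P(x):P\in\QQ\}$ form a chain, the condition $e_Q(x)\cap L_n = \emptyset$ is automatic for $x\in X\sm K$, so the extra term $e_Q^{-1}(\{W\in Q: W\cap L_n\neq\emptyset\})$ in your error bound is already absorbed by $K$. Your closing remark about enlarging $K$ to a compact-open set is well taken---it is needed to make $e^{-1}(U)$ genuinely clopen in $X$ and to match the compact-open hypothesis in the theorem statement.
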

    \begin{proof}
      Fix $P\in\mathbb{Q}$, and let $U\in P$.  Then clearly, for all $x\in
      X\setminus K$, $e_P(x) = U$ if and only if $e(x)\in U$.  Since there are
      only finitely many $U\in P$ such that one of $e_P^{-1}(\{U\})$ or
      $e^{-1}(U)$ meets $K$, it follows that
      \[
        \forall^\infty U\in P\; e_P^{-1}(\{U\}) = e^{-1}(U)
      \]
      Then $U\mapsto e^{-1}(U)$ lifts $\varphi_P$.

      Now fix $A\in\mathcal{C}(Y)$.  Then there is some $P\in\mathbb{P}$ such
      that $A$ can be written as a union of a subset of $P$.  Find
      $Q\in\mathbb{Q}$ with $Q\prec^* P$; then, up to a compact set, $A$ can be
      written as a union of some subset $a$ of $Q$.  Hence,
      \[
        \varphi [A] = \varphi_Q [a] = [e^{-1}(A)]
      \]
    \end{proof}

  \end{proof}

\bibliographystyle{amsplain}

\end{document}